\numberwithin{equation}{section}
\newtheorem{statement}{}[section]
\newtheorem*{Corollary*}{Corollary}
\newtheorem{Theorem}[statement]{Theorem}
\newtheorem*{Theorem*}{Theorem}
\newtheorem{Lemma}[statement]{Lemma}
\theoremstyle{definition}
\def\HH{\mathscr{H}}
\def\HH(b){\mathcal{H}(b)}
\def\D{\mathbb{D}}
\def\T{\mathbb{T}}
\def\phi{\varphi}
\newcommand{\Span}{\operatorname{span}}
\renewcommand{\ker}{\operatorname{ker}}
\renewcommand{\dim}{\operatorname{dim}}
\newcommand{\ind}{\operatorname{Ind}}
\newcommand{\ran}{\operatorname{ran}}
\newcommand{\beqa}{\begin{eqnarray*}}
\newcommand{\eeqa}{\end{eqnarray*}}
\renewcommand{\leq}{\leqslant}
\renewcommand{\geq}{\geqslant}
\renewcommand{\subset}{\subseteq}
\newcommand{\clb}{\mathcal{B}}
\newcommand{\clh}{\mathcal{H}}
\newcommand{\clk}{\mathcal{K}}
\title[The Cowen-Douglas class and de Branges-Rovnyak spaces]{The Cowen-Douglas class and de Branges-Rovnyak spaces}
\author[Fricain]{Emmanuel Fricain}
 \address{Univ. Lille, CNRS, UMR 8524 - Laboratoire Paul Painlevé, F-59000 Lille, France}
 \email{emmanuel.fricain@univ-lille.fr}
\author[Sarkar]{Jaydeb Sarkar}
\address{Statistics and Mathematics Unit, Indian Statistical Institute, 8th Mile, Mysore Road, Bangalore, 560059, India}
\email{jay@isibang.ac.in, jaydeb@gmail.com}
\keywords{}
\thanks{Emmanuel Fricain and Jaydeb Sarkar were supported by the Labex CEMPI (ANR-11-LABX -0007-01). Jaydeb Sarkar is also supported in part by TARE (TAR/2022/000063) by SERB, Department of Science \& Technology (DST), Government of India. }
\subjclass[2010]{47B13, 47A53, 46E22, 30H45, 32M15}
\keywords{Cowen-Douglas class of operators, de Branges-Rovnyak spaces, unitary equivalence, backward and forward shift operators, angular derivatives}
\begin{document}

\begin{abstract}
We establish a connection between the de Branges-Rovnyak spaces and the Cowen-Douglas class of operators which is associated with complex geometric structures.  We prove that the backward shift operator on a de Branges-Rovnyak space never belongs to the Cowen-Douglas class when the symbol is an extreme point of the closed unit ball of $H^\infty$ (the algebra of bounded analytic functions on the open unit disk). On the contrary, in the non extreme case, it always belongs to the Cowen-Douglas class of rank one. Additionally, we compute the curvature in this case and derive certain exotic results on unitary equivalence and angular derivatives.
\end{abstract}
\maketitle

\tableofcontents

\section{Introduction}\label{sec: intr}

In this paper, we bring together two distinct and classical theories that have so far remained largely unconnected: the Cowen-Douglas class of operators and the de Branges-Rovnyak spaces. Recall that the Cowen-Douglas class of operators is associated with complex geometric structures, specifically hermitian holomorphic vector bundles. Its curvature corresponds to the Chern connection for such bundles, and the problem of complete unitary invariance is examined through the equivalence or equality of curvatures. One important aspect of the Cowen-Douglas class of operators is the assumption of an abundance of eigenvectors in terms of rich point spectrums.

On the other hand, the de Branges-Rovnyak spaces are primarily function-theoretic objects that fundamentally depend on the nature of the symbol which parametrizes the space.  To be more specific, we recall the standard notation in which $H^\infty$ denotes the commutative Banach algebra of all bounded analytic functions on the unit disk $\D = \{z \in \mathbb{C}: |z| < 1\}$. The norm on $H^\infty$ is the standard supremum norm $\|\cdot\|_\infty$ on $\D$. Set
\[
H^\infty_1 = \{b \in  H^\infty: \|b\|_\infty \leq 1\}.
\]
Given a function $b \in H^\infty_1$, the Toeplitz operator $T_b$ is a contraction on the Hardy space $H^2$. Here, $H^2$ denotes the Hilbert space of analytic functions on $\mathbb{D}$ whose power series coefficients form a square-summable sequence (also see \eqref{def: Hardy space}). The \textit{de Branges-Rovnyak space} corresponding to $b \in H^\infty_1$ is the Hilbert space
\[
\HH(b)=(I-T_bT_{\overline{b}})^{\frac{1}{2}} H^2,
\]
equipped with the inner product
\[
\langle (I-T_bT_{\overline{b}})^{\frac{1}{2}} f, (I-T_bT_{\overline{b}})^{\frac{1}{2}} g \rangle_b=\langle f,g\rangle_2,
\]
where $f,g\in H^2\ominus \ker(I-T_bT_{\overline{b}})^{\frac{1}{2}}$ and $\langle\cdot,\cdot\rangle_2$ denotes the scalar product in $H^2$. Note that $T_b^* = T_{\bar{b}}$, and then, since $T_b$ is a contraction, the operator $I-T_bT_{\overline{b}}$ is positive and so the definition above makes sense.

The key point here is that $\mathcal H(b)$ is contractively contained in $H^2$. Following convention, we denote the Toeplitz operator $T_z$ which has the analytic symbol $z$, as the unilateral forward shift operator $S$,
\[
S(f)= T_z(f)=zf,\qquad f\in H^2.
\]
Its adjoint (with respect to the Hilbert space structure of $H^2$) $S^*$ is given by
\[
S^*(f)=\frac{f-f(0)}{z},\qquad f\in H^2.
\]
A crucial fact is that $\HH(b)$ remains invariant under the backward shift operator $S^*$, and we have the bounded linear operator $X_b: \HH(b) \rightarrow \HH(b)$ defined by
\[
X_b = S^*|_{\HH(b)}.
\]
A central role in the theory of $\HH(b)$ spaces is played by this operator. In particular, it is known that $X_b$ is a contraction on $\HH(b)$ \cite[Theorem 18.7]{FM2}. It should also be noted that a de Branges-Rovnyak space $\mathcal H(b)$ is not uniquely determined by the symbol $b$, that is two different $b$'s could define the same $\mathcal H(b)$ (with a different norm). There exists a characterization of when two de Branges-Rovnyak spaces coincide (with equivalent norms). See \cite{FM1,FM2,Sarason-book} for a comprehensive account of these spaces and the reference therein.

Next, we shift our attention to the other key object of interest in this paper. In \cite{Cowen-Douglas}, Cowen and Douglas initiated a systematic study of a class of bounded linear operators on a complex separable Hilbert space $\mathcal H$ that possess an open set $\Omega$ in $\mathbb{C}$ of eigenvalues with constant and finite multiplicity. More specifically, for a bounded linear operator $T:\mathcal H\longrightarrow\mathcal H$ (in short, $T \in \mathcal B(\mathcal H)$) and a natural number $n$, we say that $T \in B_n(\Omega)$ if the following conditions are satisfied:
\begin{enumerate}
\item $\dim(\ker(T-\omega I))=n$ for all $\omega\in\Omega$.
\item $\overline{\Span}\{\ker(T-\omega I):\omega \in \Omega\} = \mathcal H$.
\item $\ran(T-\omega I)=\mathcal H$ for all $\omega\in\Omega$.
\end{enumerate}

We also say that $T$ is a \textit{Cowen-Douglas class operator of rank $n$} on $\Omega$. Cowen and Douglas proved that for a given $T \in B_n(\Omega)$, there exists a hermitian holomorphic vector bundle $E_T$ of rank $n$ over $\Omega$ (see Section \ref{sec: invar}). Of course, our main interest is in the case $T=X_b$, $b \in H^\infty$ satisfying $\|b\|_\infty \leq 1$. More specifically, we seek to determine whether such an operator $X_b$ belongs to the Cowen-Douglas class. The following theorem, a central result of this paper, provides a precise answer to this question. As it is often the case in the theory of de Branges-Rovnyak spaces, the answer depends whether $b$ is an extreme point of $H^\infty_1$ or not. Recall that an extreme point of $H^\infty_1$ is a function in $H^\infty_1$ that does not lie in any open line segment connecting two distinct points in $H^\infty_1$ \cite[page 214]{FM1}. It is well-known that a function $b \in H^\infty_1$ is an extreme point if and only if \cite[Theorem 6.7]{FM1}
\[
\int_{\T} \log (1 - |b(z)|^2) \, dm = - \infty.
\]
We recall that a function $b \in H^\infty$ admits boundary values (in terms of radial limits) almost everywhere with respect to the standard Lebesgue measure $m$ on $\T (= \partial \D)$. This justifies the integrand function in the above integration. We can now state our first main result.

\begin{Theorem}\label{thm:CD-class}
Let $b \in H^\infty_1$. Then the following statements hold :
\begin{enumerate}
\item If $b$ is an extreme point, then $X_b \notin B_n(\Omega)$ for any natural number $n$ and open set $\Omega$ in $\mathbb{C}$.
\item If $b$ is a non-extreme point, then $X_b \in B_1(\mathbb D)$.
\end{enumerate}
\end{Theorem}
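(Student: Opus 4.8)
Throughout write $e_\omega(z)=(1-\omega z)^{-1}$ for $\omega\in\mathbb{D}$, the Cauchy kernel, and recall that $S^*e_\omega=\omega e_\omega$ and that every solution of $S^*f=\omega f$ in $H^2$ is a scalar multiple of $e_\omega$. Hence for every $\omega$,
\[
\ker(X_b-\omega I)=\mathcal{H}(b)\cap\mathbb{C}\,e_\omega,
\]
which is one-dimensional when $e_\omega\in\mathcal{H}(b)$ and is $\{0\}$ otherwise. The whole analysis therefore reduces to understanding the set $E=\{\omega\in\mathbb{D}:e_\omega\in\mathcal{H}(b)\}$: since an eigenvalue forces $|\omega|<1$, any admissible $\Omega$ must satisfy $\Omega\subseteq E\subseteq\mathbb{D}$, and the kernel-dimension condition forces $n=1$.

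For the non-extreme case I would fix the Pythagorean mate $a$ (outer, $|a|^2+|b|^2=1$ a.e.\ on $\mathbb{T}$, $a(0)>0$) and use Sarason's model: $f\in\mathcal{H}(b)$ iff $f\in H^2$ and $T_{\bar b}f=T_{\bar a}f^+$ for some $f^+\in H^2$, with $\langle f,g\rangle_b=\langle f,g\rangle_2+\langle f^+,g^+\rangle_2$. \emph{Step A:} since $T_{\bar a}e_\omega=\overline{a(\bar\omega)}\,e_\omega$ and $T_{\bar b}e_\omega=\overline{b(\bar\omega)}\,e_\omega$, the vector $e_\omega$ lies in $\mathcal{H}(b)$ with mate $\overline{\phi(\bar\omega)}\,e_\omega$, where $\phi=b/a$; thus $E=\mathbb{D}$, the kernel-dimension condition holds with $n=1$, and $\omega\mapsto e_\omega$ is a holomorphic frame. \emph{Step B (surjectivity):} given $f\in\mathcal{H}(b)$ and $\omega\in\mathbb{D}$, set $g=zf/(1-\omega z)$ and $g^+=c'e_\omega+zf^+/(1-\omega z)$; using that $T_{\bar b}$ and $T_{\bar a}$ commute with $S^*$, one checks $T_{\bar b}g-T_{\bar a}g^+\in\ker(S^*-\omega)=\mathbb{C}\,e_\omega$, and the offending scalar, whose coefficient of $c'$ is $\overline{a(\bar\omega)}\neq0$, is annihilated by a suitable $c'$; then $g\in\mathcal{H}(b)$ and $(S^*-\omega)g=f$, so $X_b-\omega$ is onto. \emph{Step C (spanning):} the inner-product formula gives $\langle f,e_\omega\rangle_b=f(\bar\omega)+\phi(\bar\omega)f^+(\bar\omega)$, so $f\perp_b e_\omega$ for all $\omega$ forces $f+\phi f^+\equiv0$, i.e.\ $af+bf^+=0$; combining this with $T_{\bar b}f=T_{\bar a}f^+$ yields $f^+/a\in\overline{zH^2}$, whence $zf^+=a\bar g$ for some $g\in H^2$, and comparing Fourier coefficients gives $a\perp S^{*m}g$ for all $m\ge0$. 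Since $a$ is outer, Beurling's theorem forces $g=0$, hence $f^+=0$ and $f=0$. Together Steps A--C give $X_b\in B_1(\mathbb{D})$.

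For the extreme case the goal is to show that $E$ has empty interior, so that no open $\Omega$ can satisfy the kernel-dimension condition; equivalently I would prove the contrapositive: if $e_\omega\in\mathcal{H}(b)$ for all $\omega$ in a nonempty open set, then $b$ is non-extreme. The natural tool is the variational description $e_\omega\in\mathcal{H}(b)\iff C_{\bar\omega}<\infty$, where for $\lambda\in\mathbb{D}$,
\[
C_\lambda=\sup_{h\in H^2}\frac{|h(\lambda)|^2}{\|h\|_2^2-\|T_{\bar b}h\|_2^2},
\]
together with the identity
\[
\|h\|_2^2-\|T_{\bar b}h\|_2^2=\int_{\mathbb{T}}(1-|b|^2)\,|h|^2\,dm+\|P_-(\bar b h)\|_2^2 .
\]
The plan is to use that on an open set the finiteness of $\|e_\omega\|_b^2$ is locally uniform (by holomorphy of the frame together with the uniform boundedness principle), and that $\log(1-|b|^2)\notin L^1(\mathbb{T})$ in the extreme case: exploiting non-log-integrability one produces, in every open set, a point $\lambda$ and test functions $h_n$ with $h_n(\lambda)$ bounded below while the displayed quadratic form tends to $0$, forcing $C_\lambda=\infty$; equivalently, local finiteness of $\|e_\omega\|_b^2$ lets one reconstruct a nonzero analytic function with boundary modulus $\sqrt{1-|b|^2}$, i.e.\ a Pythagorean mate, whose mere existence makes $\log(1-|b|^2)$ integrable and $b$ non-extreme. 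This is the step I expect to be the main obstacle, since it is exactly here that the function-theoretic content of extremeness must be converted into the operator-theoretic statement that $X_b$ has no open set of eigenvalues; the inner case $b=\theta$, where $\ker(X_\theta-\omega)\neq\{0\}$ only at the discrete conjugates of the zeros of $\theta$, is the guiding example. Granting this, extreme $b$ yields $E$ with empty interior, so the kernel-dimension and spanning conditions cannot hold on any open $\Omega$, and $X_b\notin B_n(\Omega)$ for every $n$ and $\Omega$.
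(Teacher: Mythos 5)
Part (i) of your proposal contains a genuine gap, and you have flagged it yourself: the entire content of the extreme case is the claim that the set $E=\{\omega : e_\omega\in\mathcal{H}(b)\}$ has empty interior, and your treatment of it is a plan, not a proof. Your reduction is sound --- every eigenvector of $X_b$ at $\omega$ is a multiple of $e_\omega$, so $X_b\in B_n(\Omega)$ forces $n=1$ and $\Omega\subseteq E$ --- and the variational criterion $e_\omega\in\mathcal{H}(b)\iff C_{\bar\omega}<\infty$, together with the identity $\|h\|_2^2-\|T_{\bar b}h\|_2^2=\int_{\mathbb{T}}(1-|b|^2)|h|^2\,dm+\|P_-(\bar b h)\|_2^2$, is correct. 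But neither of your two suggested mechanisms (producing test functions $h_n$ that force $C_\lambda=\infty$, or reconstructing from local finiteness of $\|e_\omega\|_b$ an analytic function with boundary modulus $\sqrt{1-|b|^2}$) is carried out, and both are genuinely nontrivial; this is precisely where the function theory of extremeness must enter. The paper closes this step at once by quoting the known description of the point spectrum in the extreme case \cite[Theorem 26.1]{FM2}, namely $\sigma_p(X_b)=\{\overline{\omega}:\omega\in\mathbb{D},\ b(\omega)=0\}$: membership in any $B_n(\Omega)$ would force $b$ to vanish on an open set, hence $b\equiv 0$ by the identity theorem, contradicting $\log(1-|b|^2)\notin L^1(\mathbb{T})$. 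In your language that theorem says exactly that $E=\{\overline{\lambda}:b(\lambda)=0\}$ for extreme $b$, a discrete set; if you wish to avoid citing it, what you must actually prove is the one-point version --- $e_\omega\in\mathcal{H}(b)$ with $b(\overline{\omega})\neq 0$ already implies $b$ is non-extreme --- and your proposal does not do this.

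Part (ii), by contrast, is complete and correct, and takes a genuinely different route from the paper. The paper cites \cite[Theorems 24.13, 24.14, Corollary 23.26]{FM2} for the eigenspaces, the spanning property, and the triviality of $\ker(X_b^*-\overline{\omega}I)$, and obtains closed range by writing $X_b^*$ as a rank-one perturbation of $S_b$ and invoking its Fredholm lemma for $S_b-\omega I$. You instead work entirely inside Sarason's mate model $T_{\bar b}f=T_{\bar a}f^+$, $\langle f,g\rangle_b=\langle f,g\rangle_2+\langle f^+,g^+\rangle_2$: Step A correctly exhibits $e_\omega\in\mathcal{H}(b)$ with mate $\overline{\varphi(\overline{\omega})}e_\omega$ (here outerness of $a$ gives $a(\overline{\omega})\neq 0$); Step B is a direct surjectivity construction, valid because $(S^*-\omega)\bigl(T_{\bar b}g-T_{\bar a}g^+\bigr)=T_{\bar b}f-T_{\bar a}f^+=0$ puts the defect in $\mathbb{C}e_\omega$, where the free scalar $c'$ removes it; and Step C's annihilation argument ($af+bf^+\equiv 0$ combined with the mate equation yields $f^+/a\in\overline{zH^2}$ on $\mathbb{T}$, whence $g\perp z^m a$ for all $m\geq 0$ and Beurling plus outerness kill $g$, then $f^+$, then $f$) is correct. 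Your route buys a self-contained, Fredholm-free proof of (ii) with explicit preimages, at the cost of assuming the mate model; the paper's route is shorter but citation-heavy. So (ii) stands as an alternative proof, while (i) remains an honest sketch rather than a proof.
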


This result has several significant implications. Here, we highlight two key points. First, it provides a wealth of new and exotic examples of Cowen-Douglas class operators of rank one. We now know that the backward shift operators on de Branges-Rovnyak spaces corresponding to non-extreme points indeed belong to the Cowen-Douglas class $B_1(\D)$. This, in turn, creates a natural link between these two concepts.

Second, Theorem~\ref{thm:CD-class} will will give us  a method to determine whether two different operators $X_{b_1}$ and $X_{b_2}$, acting respectively on two different de Branges-Rovnyak spaces $\mathcal H(b_1)$ and $\mathcal H(b_2)$, are unitary equivalent. Recall that $T_1 \in \clb(\clh_1)$ and $T_2 \in \clb(\clh_2)$ are \textit{unitarily equivalent} if there exists a unitary operator $U: \clh_1 \rightarrow \clh_2$ such that $U T_1 = T_2 U$. We write this simply as
\[
T_1 \cong T_2.
\]
Given $T \in\clb(\clh)$ which belongs to $B_1(\D)$, we continue our discussion of the hermitian holomorphic vector bundle $E_T$ over $\D$ (as Theorem \ref{thm:CD-class} allows us to restrict our attention to the rank one case). In this case, the line bundle $E_T$ yields the \textit{curvature} $\clk_T$ of $T$, where (see \cite[Theorem 1.17]{Cowen-Douglas})
\begin{equation}\label{eq:def-curvature}
\clk_T(\omega)=-\frac{\partial^2}{\partial\omega\partial\overline{\omega}}\log\|\gamma_{T, \omega}\|^2 \qquad (\omega \in \D),
\end{equation}
and $\gamma_{T, \cdot}: \D \longrightarrow \mathcal H$ is any nonzero holomorphic cross-section of $E_T$. The important fact is that this curvature is a complete unitary invariant for operators in $B_1(\D)$. See Section~\ref{sec: invar} for details.

Given a non-extreme point $b \in H^\infty_1$, we consider the \textit{pythagorean pair} $(a,b)$, where $a \in H^\infty_1$ is the unique outer function such that $a(0)>0$ and
\[
|a|^2 + |b|^2 = 1,
\]
a.e. on $\T$. We set the ratio
\[
\Phi_b := \frac{b}{a}.
\]
Let $b_1, b_2 \in H^\infty_1$ be two non-extreme points. Assume that $(a_1,b_1)$ and $(a_2,b_2)$ are the corresponding pythagorean pairs. In Theorem \ref{Thm:Xb-unitary-equivalence}, we prove that
\[
X_{b_1} \cong X_{b_2},
\]
if and only if
\[
\frac{|\Phi_{b_1}'(\omega)|}{1+|\Phi_{b_1}(\omega)|^2} = \frac{|\Phi_{b_2}'(\omega)|}{1 + |\Phi_{b_2}(\omega)|^2} \qquad (\omega\in\mathbb D).
\]

In the context of non-extreme case, there is another operator that interests us. If $b \in H^\infty_1$ is a non-extreme point, then $S \HH(b) \subseteq \HH(b)$, and consequently,  the restriction operator
\[
S_b = S|_{\HH(b)}
\]
is a bounded operator on $\HH(b)$, that is $S_b \in \clb(\HH(b))$. In Theorem \ref{Thm:Sbstar-CD}, we prove that
\[
S_b^*\in B_1(\mathbb D).
\]
Using one more time the fact that the curvature defined in \eqref{eq:def-curvature} is a complete unitary invariant, we get, in Theorem \ref{Thm-Sb-unitary-equivalent}, that for non extreme points $b_1,b_2 \in H^\infty_1$,
\[
S_{b_1} \cong S_{b_2}
\]
if and only if
\[
\frac{|b_1'(\omega)|}{1-|b_1(\omega)|^2}=\frac{|b_2'(\omega)|}{1-|b_2(\omega)|^2} \qquad (\omega\in\mathbb D).
\]

We establish an additional connection between classical Carath\'eodory angular derivatives, de Branges Rovnyak spaces, and the Cowen-Douglas class of operators as follows (see Theorem \ref{cor:ADC}): Let $b_1,b_2 \in H^\infty_1$ be two rational (not inner) functions in $H^\infty_1$. Assume that
\[
S_{b_1} \cong S_{b_2}.
\]
Then $b_1$ and $b_2$ must have the same Carath\'eodory angular derivative points.

The rest of the paper is organized as follows. Section \ref{sec: dB and CD} provides a complete overview of de Branges-Rovnyak spaces and highlights that operators corresponding to non-extreme points belong only to $B_1(\D)$. Section \ref{sec: invar} computes the curvatures of operators associated with de Branges-Rovnyak spaces and presents a clear-cut expression for complete unitary invariants. Section \ref{sec: derivatives} connects our theory with yet another classical notion, namely Carath\'eodory angular derivatives. The final section, Section \ref{sec: examples}, provides a concrete example to illustrate some of the main results of this paper.

\section{The de Branges-Rovnyak spaces in $B_1(\D)$}\label{sec: dB and CD}

Generally, the theory of de Branges-Rovnyak spaces bifurcates into two directions depending on whether the symbol $b \in H^\infty_1$ is an extreme point or not. The purpose of this section is to prove the central result (Theorem \ref{thm:CD-class}) of this paper, which once more explicitly establishes this dichotomy.

We begin by recalling some fundamental and well-known concepts concerning de Branges-Rovnyak spaces. First, the Hardy space $H^2$ is the Hilbert space of holomorphic functions $f$ on $\D$ such that
\begin{equation}\label{def: Hardy space}
\|f\|_2 := \sup_{r \in (0,1)}\left(\frac{1}{2 \pi}  \int_{0}^{2 \pi} |f(r e^{i \theta})|^2\,d\theta\right)^{\frac{1}{2}} < \infty.
\end{equation}
As noted in the introduction, the de Branges-Rovnyak space $\mathcal H(b)$ associated with $b \in H^\infty_1$ is given by the range space
\[
\HH(b)=(I-T_bT_{\overline{b}})^{\frac{1}{2}}H^2.
\]
This space is an algebraic complement of the range space $bH^2$ (a Hilbert space with respect to a similar range-based inner product). It is well known that $\HH(b)$ is a reproducing kernel Hilbert space corresponding to the kernel function $k^b: \D \times \D \rightarrow \mathbb{C}$ defined by
\[
k^b(z,\omega) =k_\omega^b(z)= \frac{1-\overline{b(\omega)}b(z)}{1-\overline{\omega}z},
\]
for all $z, \omega \in \D$. In particular,  for every function $f\in\HH(b)$ and $\omega\in\mathbb D$, we have the \textit{reproducing property} as
\[
f(\omega)=\langle f,k_\omega^b\rangle_b,
\]
and $\{k_{\omega}^b:\omega\in\D\}$ is a total set in $\HH(b)$. It is worth pointing out that the kernel function $k^b_\omega$ is a scaling of the Szeg\"{o} kernel $k_\omega$ of the disc $\D$, where $k_\omega(z)=(1-\overline{\omega}z)^{-1}$ for all $z, \omega \in \D$. More specifically, given that $T_{\bar{b}} k_{\omega} = \overline{b(\omega)} k_{\omega}$, we have
\[
k_\omega^b=(I-T_bT_{\overline{b}})k_\omega,
\]
for all $\omega \in \D$. Also, recall that
\[
X_b f = S^*f \qquad (f \in \HH(b),
\]
defines a contraction on $\HH(b)$ \cite[Theorem 18.7]{FM2}, where $S^*$ is the backward shift operator on $H^2$ given by
\[
S^*f = T_{\overline{z}}f = (f-f(0))/z.
\]
In view of the operator $X_b$, we can define a more general operator $Q_w$ for each $w \in \D$ as follows:
\[
Q_wf(z)=\frac{f(z)-f(\omega)}{z-\omega},
\]
for all $f\in\HH(b)$ and $z \in \D$. Observe that, since $X_b$ is a contraction, the operator $I-\omega X_b$ is invertible for every $\omega\in\D$, and $Q_wf=(I-\omega X_b)^{-1}X_b$, whence $Q_\omega f\in\HH(b)$ for all $f \in \HH(b)$. Moreover, note that $Q_0 = X_b$. \\

Returning to $X_b$, we observe that $S^*b\in\HH(b)$, and for every $f\in\HH(b)$, we have the well-known identity \cite[Theorem 18.22]{FM2}
\begin{equation}\label{eq:adjoint-Xb}
X_b^*f = S f - \langle f,S^*f\rangle_b b.
\end{equation}
When $b \in H^\infty_1$ is a non-extreme point, it follows that $b\in\HH(b)$, and then, the above identity implies that \cite[Corollary 23.9 and Theorem 24.1]{FM2}
\[
S \HH(b) \subseteq \HH(b).
\]
In this case, we write the restriction operator
\[
S_b = S|_{\HH(b)}.
\]
Then \eqref{eq:adjoint-Xb} can be written as
\begin{equation}\label{eq:adjoint-Xb-nonextreme}
X_b^*=S_b-b\otimes S^*b.
\end{equation}
In view of the characterizations of extreme points introduced in Section \ref{sec: intr}, we also recall that a function $b \in H^\infty_1$ is non-extreme if and only if
\[
\int_{\T} \log (1 - |b(z)|^2) \, dm > - \infty.
\]
The following lemma will be the key point to studying the membership of $X_b$ to the Cowen-Douglas class.

\begin{Lemma}\label{lem-Sb-Fredholm}
Let $b$ be a non-extreme point of $H^\infty_1$ and let $\omega\in\mathbb D$. Then $S_b-\omega I$ is a Fredholm operator with
\[
\ind (S_b - \omega I) = -1.
\]
\end{Lemma}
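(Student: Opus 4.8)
The plan is to show that $S_b - \omega I$ is injective with closed range of codimension exactly one, which immediately yields that it is Fredholm with $\ind(S_b - \omega I) = 0 - 1 = -1$. Injectivity is immediate: if $f \in \HH(b)$ and $(S_b - \omega I)f = 0$, then $(z-\omega)f(z) = 0$ for all $z \in \D$, and since $z - \omega \not\equiv 0$ while $f$ is holomorphic, $f \equiv 0$; hence $\ker(S_b - \omega I) = \{0\}$.

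The heart of the argument is to identify the range. On one hand, every element of $\ran(S_b - \omega I)$ has the form $g = (z - \omega)f$ with $f \in \HH(b)$, so that $g(\omega) = 0$; thus $\ran(S_b - \omega I) \subseteq \{g \in \HH(b) : g(\omega) = 0\}$. For the reverse inclusion I would take $g \in \HH(b)$ with $g(\omega) = 0$ and set $f := Q_\omega g$. Since $g(\omega) = 0$ we have $f(z) = (g(z) - g(\omega))/(z - \omega) = g(z)/(z - \omega)$, and because $Q_\omega$ maps $\HH(b)$ into itself (recall that $Q_\omega = (I - \omega X_b)^{-1} X_b$ is bounded on $\HH(b)$, as $X_b$ is a contraction), we obtain $f \in \HH(b)$ with $(S_b - \omega I)f = (z-\omega)f = g$. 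This gives
\[
\ran(S_b - \omega I) = \{g \in \HH(b) : g(\omega) = 0\}.
\]

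Finally I would observe that the right-hand side is exactly the kernel of the bounded evaluation functional $g \mapsto g(\omega) = \langle g, k_\omega^b \rangle_b$, hence a closed subspace, and that its codimension equals one provided $k_\omega^b \neq 0$. The latter holds because $b$ is non-extreme: then $|b(\omega)| < 1$ for every $\omega \in \D$ (otherwise the maximum principle would force $b$ to be a unimodular constant, hence inner and extreme), so that $\|k_\omega^b\|_b^2 = k_\omega^b(\omega) = (1 - |b(\omega)|^2)/(1 - |\omega|^2) > 0$. Combining the three facts — trivial kernel, closed range, and one-dimensional cokernel — gives that $S_b - \omega I$ is Fredholm with index $-1$.

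The step I expect to be the main obstacle is the reverse inclusion for the range, namely that dividing a function $g \in \HH(b)$ vanishing at $\omega$ by $z - \omega$ keeps it inside $\HH(b)$. This is precisely where the difference quotient operator $Q_\omega$ and the contractivity of $X_b$ carry the load, and it is also implicitly where the non-extreme hypothesis enters, since it guarantees both that $S_b$ is a genuine bounded operator on $\HH(b)$ and that the reproducing kernel $k_\omega^b$ is nonzero, so that the cokernel is exactly one-dimensional.
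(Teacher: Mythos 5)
Your proposal is correct and follows essentially the same route as the paper: injectivity by holomorphy, identification of $\ran(S_b-\omega I)$ with $(\mathbb{C}k_\omega^b)^\perp$ via the difference-quotient operator $Q_\omega=(I-\omega X_b)^{-1}X_b$, and the index count $0-1=-1$. The only (welcome) addition is that you explicitly verify $k_\omega^b\neq 0$ via $\|k_\omega^b\|_b^2=(1-|b(\omega)|^2)/(1-|\omega|^2)>0$, a point the paper leaves implicit when it writes $\ker(S_b^*-\overline{\omega}I)=\mathbb{C}k_\omega^b$.
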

\begin{proof}
By the definition of $S_b$, it is clear that $S_b f = f$ for $f \in \HH(b)$ implies that $f = 0$. This yields $\ker(S_b-\omega I)=\{0\}$ (that is, $\sigma_p(S_b)=\emptyset$). Let us prove now that
\[
\ran(S_b-\omega I)=(\mathbb Ck_\omega^b)^\perp.
\]
First, let $f\in\ran(S_b-\omega I)$, that is $f=(z-\omega)g$ for some $g\in\HH(b)$. Then the reproducing property implies
\[
\langle f,k_\omega^b\rangle_b=f(\omega)=0,
\]
which proves that $f\in (\mathbb Ck_\omega^b)^\perp$. Conversely, let $f\perp k_\omega^b$ for some $f\in\HH(b)$. Using the reproducing property, we have $f(\omega)=0$. We know, in general, that $Q_\omega f \in \HH(b)$, where, in this particular situation, we have
\[
(Q_\omega f)(z) = \frac{f(z)-f(\omega)}{z-\omega}=\frac{f(z)}{z-\omega},
\]
for all $z \in \D$. This implies
\[
f=(S_b-\omega I)Q_\omega f.
\]
Thus we deduce that $(\mathbb Ck_\omega^b)^\perp\subset \ran(S_b-\omega I)$, proving the reversed inclusion. In particular, $\ran(S_b-\omega I)=(\mathbb Ck_\omega^b)^\perp$ proves that $\ran(S_b-\omega I)$ is closed and
\begin{equation}\label{eq:noyau-S-b-adjoint-omega-I}
\ker(S_b^*-\overline{\omega}I) = (\ran(S_b-\omega I))^\perp = \mathbb C k_\omega^b.
\end{equation}
We thus conclude that $S_b-\omega I$ is Fredholm, with index equals to
\[
\ind(S_b-\omega I) = \dim(\ker(S_b-\omega I))-\dim(\ker(S_b^*-\overline{\omega}I)) = -1.
\]
This concludes the proof of the lemma. 
\end{proof}

This result may be of independent interest. With this in place, we are now ready to prove the central result of this paper:

\bigskip

\noindent \textit{Proof of Theorem \ref{thm:CD-class}:} To prove (i), we assume that $b \in H^\infty_1$ is an extreme point. It follows from \cite[Theorem 26.1]{FM2} that the point spectrum of $X_b$ is given by
\[
\sigma_p(X_b)=\{\overline{\omega}:\omega\in\mathbb D,\, b(\omega)=0\}.
\]
In particular, if $X_b\in B_n(\Omega)$ for some $n\geq 1$ and some nonempty open subset $\Omega$ of $\mathbb D$, then
\[
b|_{\Omega} \equiv 0.
\]
By continuation principle, we thus deduce that $b\equiv 0$, which contradicts the fact that the function
\[
\log(1-|b|^2) \notin L^1(\mathbb T).
\]
For (ii), assume that $b \in H^\infty_1$ is not an extreme point. In this case, we recall from \cite[Theorem 24.13]{FM2} that
\[
\sigma_p(X_b)=\mathbb D,
\]
and
\begin{equation}\label{eq:eigenspaces-Xb}
\ker(X_b-\overline{\omega} I)=\mathbb C k_\omega,
\end{equation}
for all $\omega\in\mathbb D$. In particular, $\dim(\ker(X_b-\overline{\omega} I))=1$ for every $\omega\in\mathbb D$. Moreover, we recall from \cite[Corollary 23.26]{FM2} that
\[
\overline{\text{span}}\{ \ker(X_b-\overline{\omega} I):\omega\in \D\} = \overline{\text{span}}\{ k_\omega:\omega\in\mathbb D\} = \HH(b).
\]
Thus, to prove that $X_b \in B_1(\D)$, it only remains to verify that
\[
\ran(X_b-\omega I)=\HH(b),
\]
for all $\omega\in\mathbb D$. Equivalently,
\[
\ran(X_b^*-\overline{\omega} I)\mbox{ is closed},
\]
and
\[
\ker(X_b^*-\overline{\omega} I)=\{0\},
\]
for all $\omega\in\mathbb D$. Fix $\omega\in\mathbb D$. According to \cite[Theorem 24.14]{FM2},
\[
\sigma_p(X_b^*)=\emptyset,
\]
whence $\ker(X_b^*-\overline{\omega} I)=\{0\}$ is satisfied. On the other hand, note that \eqref{eq:adjoint-Xb-nonextreme} implies that
\[
X_b^*-\overline{\omega}I=S_b-\overline{\omega}I-b\otimes S^*b.
\]
According to Lemma~\ref{lem-Sb-Fredholm}, $S_b-\overline{\omega}I$ is a Fredholm operator. Thus $X_b^*-\overline{\omega}I$ is also a Fredholm operator. In particular, $\ran(X_b^*-\overline{\omega}I)$ is closed, which proves that $\ran(X_b-\omega I)=\HH(b)$. Finally, we can conclude that $X_b\in B_1(\mathbb D)$.
\qed

\bigskip

In particular, this result exhibits yet another property of extreme and non-extreme points of $H^\infty_1$. We recall once again that for a non-extreme $b \in H^\infty_1$, we have $S \HH(b) \subseteq \HH(b)$, and we define the restriction operator $S_b = S|_{\HH(b)}$. We claim that $S_b^*$ is in $B_1(\mathbb D)$:

\begin{Theorem}\label{Thm:Sbstar-CD}
Let $b$ be a non-extreme point of $H^\infty_1$. Then $S_b^*\in B_1(\mathbb D)$.
\end{Theorem}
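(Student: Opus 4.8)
The plan is to verify directly the three defining conditions of $B_1(\mathbb{D})$ for the operator $S_b^*$, using Lemma~\ref{lem-Sb-Fredholm} as the main engine. Since that lemma already records the full Fredholm structure of $S_b - \omega I$ together with the explicit description of its kernel and range, the argument should be short.

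First I would settle the eigenspace condition (i). Formula \eqref{eq:noyau-S-b-adjoint-omega-I} in the proof of Lemma~\ref{lem-Sb-Fredholm} gives $\ker(S_b^* - \overline{\omega}I) = \mathbb{C}k_\omega^b$ for every $\omega \in \mathbb{D}$. Since $\mathbb{D}$ is invariant under complex conjugation, reparametrizing by $\lambda = \overline{\omega}$ yields $\ker(S_b^* - \lambda I) = \mathbb{C}k_{\overline{\lambda}}^b$, which is one-dimensional for each $\lambda \in \mathbb{D}$. This is exactly condition (i) with $n = 1$.

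The spanning condition (ii) is then immediate: by the previous step,
\[
\overline{\text{span}}\{\ker(S_b^* - \lambda I) : \lambda \in \mathbb{D}\} = \overline{\text{span}}\{k_\omega^b : \omega \in \mathbb{D}\},
\]
and the right-hand side equals $\mathcal{H}(b)$ because the reproducing kernels $\{k_\omega^b\}_{\omega\in\mathbb{D}}$ form a total set in $\mathcal{H}(b)$, as recalled at the start of this section. For the surjectivity condition (iii) I would invoke the closed range theorem. Lemma~\ref{lem-Sb-Fredholm} asserts that $S_b - \overline{\lambda}I$ is Fredholm---hence has closed range---and is injective, for every $\lambda \in \mathbb{D}$. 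Passing to the adjoint,
\[
\ran(S_b^* - \lambda I) = \big(\ker(S_b - \overline{\lambda}I)\big)^\perp = \{0\}^\perp = \mathcal{H}(b),
\]
so $S_b^* - \lambda I$ is onto. Combining (i)--(iii) gives $S_b^* \in B_1(\mathbb{D})$.

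I expect essentially no serious obstacle here, as Lemma~\ref{lem-Sb-Fredholm} carries the weight. The only point demanding care is the range computation: one must use that a Fredholm operator has \emph{closed} range, so that $\ran(S_b^* - \lambda I)$ is all of $\mathcal{H}(b)$ rather than merely dense. This is precisely where the injectivity of $S_b - \overline{\lambda}I$---and not just the finiteness of its cokernel---is essential.
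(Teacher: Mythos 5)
Your proof is correct and follows essentially the same route as the paper: both rest entirely on Lemma~\ref{lem-Sb-Fredholm} and the kernel identification \eqref{eq:noyau-S-b-adjoint-omega-I}, with totality of the kernels $\{k_\omega^b\}$ giving the spanning condition. If anything, you are slightly more explicit than the paper on condition (iii), spelling out that surjectivity of $S_b^*-\lambda I$ needs both the closed range (Fredholmness) and the injectivity of $S_b-\overline{\lambda}I$ via $\ran(S_b^*-\lambda I)=\bigl(\ker(S_b-\overline{\lambda}I)\bigr)^\perp$, a step the paper's proof leaves implicit.
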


\begin{proof}
Recall that \eqref{eq:noyau-S-b-adjoint-omega-I} says that, for every $\omega\in\mathbb D$, we have
\[
\ker(S_b^*-\overline{\omega}I)=\mathbb C k_\omega^b.
\]
Hence
\[
\dim(\ker(S_b^*-\overline{\omega}I))=1,
\]
and we also trivially get that
\[
\overline{\text{span}}\{\ker(S_b^*-\overline{\omega}I):\omega\in\mathbb D \} = \HH(b).
\]
On the other hand, according to Lemma \ref{lem-Sb-Fredholm}, $S_b-\omega I$ and thus $S_b^*-\overline{\omega}I$ has a closed range, which finally proves that $S_b^*\in B_1(\mathbb D)$.
\end{proof}

A large and important class of extreme points of $H^\infty_1$ are the inner functions. When $b=\Theta$ is an inner function, then $\HH(b)$ coincides with $K_\Theta=(\Theta H^2)^\perp$, a prototype of the family of closed backward shift invariant subspaces of $H^2$. In this case, $X_b$ becomes the adjoint of the model operator $S_\Theta$, where
\[
S_\Theta = P_{K_\Theta} T_z|_{K_\Theta},
\]
and $P_{K_\Theta}$ is the orthogonal projection of $H^2$ onto $K_\Theta$. Theorem \ref{thm:CD-class}, in particular, proves that model operators' adjoints are not in $B_n(\D)$ for any $n \in \mathbb{N}$. In the context of classical models and de Branges–Rovnyak spaces with non-extreme symbols, we refer the reader to \cite{JT}.

\section{Curvatures as unitary invariants}\label{sec: invar}

Operators in $B_n(\Omega)$ are naturally associated with curvature (in view of the Chern connection). In our case, we already know that $X_b$ belongs to the Cowen-Douglas class if and only if $b$ is a non-extreme point, and in that case, $X_b$ is in the Cowen-Douglas class of rank one (that is, $X_b \in B_1(\D)$). However, for the rank-one case, curvature serves as a complete unitary invariant (see Theorem \ref{Thm:cowen-douglas} below). This motivates our study in this section, where we compute the curvature of the line bundle $E_{X_b}$ corresponding to $X_b \in B_1(\D)$. Using Pythagorean pairs corresponding to non-extreme points, we provide computable unitary invariants for $X_b$.

To construct the curvature, we first isolate the hermitian holomorphic vector bundle corresponding to a given $T \in\clb(\clh)$ which belongs to $B_1(\D)$. Given $\omega \in \D$, we define
\[
E_T(\omega) = \{\omega\} \times \ker(T-\omega I).
\]
Then $\omega \mapsto E_T(\omega)$ defines a rank one hermitian holomorphic vector bundle over $\D$ as (see \cite{Cowen-Douglas})
\begin{align*}
E_T :=  & \bigcup_{\omega \in \D} E_T(\omega)
\\
\bigg\downarrow &
\\
\D
\end{align*}
In this case, there exists a holomorphic $\mathcal H$-valued function $\gamma_{T, \cdot}: \D \longrightarrow \mathcal H$ such that for every $\omega\in U$, we have the spanning property
\[
\mathbb{C} \gamma_{T, \omega} = \ker(T-\omega I).
\]
The \textit{curvature} $\clk_T$ of $T$ is then given by
\[
\clk_T(\omega)=-\frac{\partial^2}{\partial\omega\partial\overline{\omega}}\log\|\gamma_{T, w}\|^2 \qquad (\omega \in \D).
\]
The important fact is that this curvature is a complete unitary invariant for operators in $B_1(\D)$ (see Cowen and Douglas \cite{Cowen-Douglas}):

\begin{Theorem}\label{Thm:cowen-douglas} Let $\Omega$ be a domain in $\mathbb{C}$, $\clh_j$ be a Hilbert space, and let $T_j \in \clb(\clh_j)$, $j =1,2$. Assume that $T_1,T_2\in B_1(\Omega)$. Then $T_1 \cong T_2$ if and only if
\[
\clk_{T_1} = \clk_{T_2} \text{ on } \Omega.
\]
\end{Theorem}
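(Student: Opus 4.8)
The plan is to prove the two implications separately: the forward direction is essentially bookkeeping with the cross-section, while the converse rests on the rigidity built into Hermitian holomorphic line bundles.

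\emph{The easy direction.} Suppose $T_1\cong T_2$ via a unitary $U:\clh_1\to\clh_2$ with $UT_1=T_2U$. Then $U$ carries $\ker(T_1-\omega I)$ onto $\ker(T_2-\omega I)$ for each $\omega\in\Omega$, so $\omega\mapsto U\gamma_{T_1,\omega}$ is a nonvanishing holomorphic cross-section of $E_{T_2}$. The first point to record is that the curvature in \eqref{eq:def-curvature} is independent of the chosen cross-section: replacing $\gamma_{T,\cdot}$ by $f\gamma_{T,\cdot}$, with $f$ holomorphic and nowhere vanishing, changes $\log\|\gamma_{T,\omega}\|^2$ by $\log|f|^2$, a harmonic function, which is annihilated by $\partial_\omega\partial_{\bar\omega}$. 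Computing $\clk_{T_2}$ with the cross-section $U\gamma_{T_1,\cdot}$ and using $\|U\gamma_{T_1,\omega}\|=\|\gamma_{T_1,\omega}\|$ then yields $\clk_{T_2}=\clk_{T_1}$.

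\emph{The converse: normalizing the cross-sections.} Assume $\clk_{T_1}=\clk_{T_2}$ on $\Omega$, and fix nonvanishing holomorphic cross-sections $\gamma_j=\gamma_{T_j,\cdot}$ (these exist by the Cowen--Douglas construction). Writing $\psi_j(\omega)=\|\gamma_j(\omega)\|^2$, the hypothesis reads $\partial_\omega\partial_{\bar\omega}\log(\psi_1/\psi_2)=0$, i.e. $\log(\psi_1/\psi_2)$ is harmonic on $\Omega$. Working on a simply connected neighborhood, a real harmonic function is $\Re$ of a holomorphic function, so $\log(\psi_1/\psi_2)=\log|f|^2$ for some nowhere-vanishing holomorphic $f$; replacing $\gamma_2$ by $f\gamma_2$ (still a cross-section of $E_{T_2}$) we may assume $\|\gamma_1(\omega)\|=\|\gamma_2(\omega)\|$ on that neighborhood.

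\emph{Rigidity and construction of $U$.} The function $(\omega,\zeta)\mapsto\langle\gamma_j(\omega),\gamma_j(\zeta)\rangle$ is holomorphic in $\omega$, antiholomorphic in $\zeta$, and restricts to $\psi_j(\omega)$ on the diagonal $\zeta=\omega$. Since $\psi_1=\psi_2$ on an open set, polarization together with the identity theorem (using that $\Omega$ is a domain) forces $\langle\gamma_1(\omega),\gamma_1(\zeta)\rangle=\langle\gamma_2(\omega),\gamma_2(\zeta)\rangle$ for all $\omega,\zeta\in\Omega$. Hence $\gamma_1(\omega)\mapsto\gamma_2(\omega)$ preserves all inner products and extends to a linear isometry $U$ from $\overline{\Span}\{\gamma_1(\omega):\omega\in\Omega\}$ onto $\overline{\Span}\{\gamma_2(\omega):\omega\in\Omega\}$; by condition (2) in the definition of $B_1(\Omega)$ both spans are the full spaces, so $U$ is unitary. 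Finally, from $T_j\gamma_j(\omega)=\omega\gamma_j(\omega)$ we get $UT_1\gamma_1(\omega)=\omega\gamma_2(\omega)=T_2U\gamma_1(\omega)$, and by density $UT_1=T_2U$, so $T_1\cong T_2$.

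\emph{Main obstacle.} The delicate point is that the normalization in the second step is only local, so \emph{a priori} $U$ is built patch by patch; the genuine content is the rigidity in the third step, where the equality of the diagonal norms $\psi_1=\psi_2$ on a single open set must be upgraded to equality of the full Gram kernels on all of $\Omega$. This upgrade—via separate holomorphy/antiholomorphy and analytic continuation—is exactly what guarantees that the locally defined isometries agree and assemble into one global unitary, and is the heart of why curvature alone, a real scalar function, captures the entire metric structure of a line bundle.
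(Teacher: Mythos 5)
The paper does not prove this theorem at all: it is quoted as known, with a citation to Cowen and Douglas \cite{Cowen-Douglas}. So your attempt should be measured against the standard rank-one rigidity argument from the literature, and in outline that is exactly what you wrote: the easy direction via invariance of the curvature under change of section (multiplication by a nonvanishing holomorphic $f$ changes $\log\|\gamma\|^2$ by the harmonic function $\log|f|^2$), and the converse via local normalization of the sections plus the polarization/uniqueness principle that a sesquiholomorphic kernel is determined by its values on the diagonal. Both of those computations are correct as you state them.

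There is, however, one step that does not hold together as written. After you replace $\gamma_2$ by $f\gamma_2$, the normalized section exists only on the simply connected patch $V$ on which $f$ was constructed, so the identity theorem yields equality of the Gram kernels on $V\times V$ only; your assertion that they agree ``for all $\omega,\zeta\in\Omega$'' is not meaningful, since $f$ need not extend to $\Omega$. Consequently the isometry is a priori defined only on $\overline{\Span}\{\gamma_1(\omega):\omega\in V\}$, and condition (2) in the definition of $B_1(\Omega)$, which you invoke to conclude unitarity, gives density of the eigenvector span over all of $\Omega$, not over the subdomain $V$. The repair is a standard lemma that you must state and prove: if $T\in B_1(\Omega)$ with $\Omega$ connected, then the eigenvectors over any nonempty open $V\subset\Omega$ already span densely. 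Indeed, if $x\perp\gamma(\omega)$ for all $\omega\in V$, the function $\omega\mapsto\langle\gamma(\omega),x\rangle$ is holomorphic and vanishes on $V$, hence vanishes on all of $\Omega$ by the identity theorem (continuing along chains of overlapping local sections if no global section is available), and then $x=0$ by condition (2). With this lemma a single patch suffices, and the ``main obstacle'' you flag at the end---assembling patchwise unitaries---evaporates: the one unitary built on $V$ intertwines $T_1$ and $T_2$ on a dense set, hence everywhere. In fact the patching you envisage would be genuinely problematic, since the normalizing factors on overlapping patches agree only up to unimodular constants, so the local unitaries need not literally glue; the single-patch route via the density lemma is the correct one.
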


Our primary interest lies in the case $T = X_b$. By Theorem \ref{thm:CD-class}, we know that when $b$ is a non-extreme point in $H^\infty_1$, then $X_b\in B_1(\mathbb D)$. Recall from \eqref{eq:eigenspaces-Xb} that $\ker(X_b-\omega I)=\mathbb C k_{\overline{\omega}}$, and hence
\begin{equation}\label{eq:vector-bundle-Xb}
\gamma_{X_b, \omega} = k_{\overline{\omega}} \in \HH(b),
\end{equation}
for all $\omega \in \D$. To get a criterion for unitarily equivalence between two $X_{b}$ operators, we will need the following simple technical lemma.

\begin{Lemma}\label{lem:calcul}
Let $\varphi:\mathbb D\longrightarrow\mathbb C$ be a holomorphic function. Then, for every $\omega\in\mathbb D$, we have
\[
\frac{\partial^2}{\partial\omega\partial\overline{\omega}}\log(1+|\varphi(\omega)|^2)=\frac{|\varphi'(\omega)|^2}{(1+|\varphi(\omega)|^2)^2}.
\]
Moreover, if $\varphi(\mathbb D)\subset\mathbb D$, we also have, for every $\omega\in\mathbb D$,
\[
\frac{\partial^2}{\partial\omega\partial\overline{\omega}}\log(1-|\varphi(\omega)|^2)=-\frac{|\varphi'(\omega)|^2}{(1-|\varphi(\omega)|^2)^2}.
\]
\end{Lemma}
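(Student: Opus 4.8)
The plan is to reduce both identities to a single computation by recognizing that the mixed partial $\frac{\partial^2}{\partial\omega\partial\overline\omega}$ is (up to a factor of $\tfrac14$) the Laplacian, and that for a holomorphic function $\varphi$ the quantities $|\varphi(\omega)|^2$ and $|\varphi'(\omega)|^2$ interact nicely with the Wirtinger derivatives $\partial/\partial\omega$ and $\partial/\partial\overline\omega$. The crucial elementary facts I would record first are that $\frac{\partial}{\partial\overline\omega}\varphi(\omega)=0$ and $\frac{\partial}{\partial\omega}\overline{\varphi(\omega)}=0$ (since $\varphi$ is holomorphic, $\overline\varphi$ is antiholomorphic), together with $\frac{\partial}{\partial\omega}\varphi(\omega)=\varphi'(\omega)$ and $\frac{\partial}{\partial\overline\omega}\overline{\varphi(\omega)}=\overline{\varphi'(\omega)}$.

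For the first identity, I would compute in two stages. Writing $u=1+|\varphi|^2=1+\varphi\overline\varphi$, I first take the $\overline\omega$-derivative of $\log u$, obtaining $\frac{1}{u}\frac{\partial u}{\partial\overline\omega}=\frac{\varphi\,\overline{\varphi'}}{1+|\varphi|^2}$, where I used that only the $\overline\varphi$ factor depends on $\overline\omega$. Then I differentiate this in $\omega$ using the quotient (or product) rule. The numerator's $\omega$-derivative gives $\varphi'\overline{\varphi'}=|\varphi'|^2$ (the factor $\overline{\varphi'}$ is antiholomorphic, hence killed by $\partial/\partial\omega$), while the $\omega$-derivative of the denominator $1+|\varphi|^2$ is $\varphi'\overline\varphi$. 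Assembling these via the quotient rule yields
\[
\frac{|\varphi'|^2(1+|\varphi|^2)-\varphi\,\overline{\varphi'}\cdot\varphi'\overline\varphi}{(1+|\varphi|^2)^2}
=\frac{|\varphi'|^2(1+|\varphi|^2)-|\varphi|^2|\varphi'|^2}{(1+|\varphi|^2)^2}
=\frac{|\varphi'|^2}{(1+|\varphi|^2)^2},
\]
which is exactly the claimed expression. The second identity is proved by the identical computation with the sign of $|\varphi|^2$ reversed: setting $v=1-|\varphi|^2$, the hypothesis $\varphi(\mathbb D)\subset\mathbb D$ guarantees $v>0$ so that $\log v$ is well defined, and the same two-step differentiation produces $\frac{-|\varphi'|^2(1-|\varphi|^2)-|\varphi|^2|\varphi'|^2}{(1-|\varphi|^2)^2}=-\frac{|\varphi'|^2}{(1-|\varphi|^2)^2}$, where the extra minus signs propagate from $\partial v/\partial\overline\omega=-\varphi\,\overline{\varphi'}$.

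There is no genuine obstacle here; the lemma is a routine Wirtinger-calculus exercise and the only thing to be careful about is bookkeeping the holomorphic/antiholomorphic vanishing of the cross terms so that the middle terms cancel cleanly. The one substantive point worth stating explicitly is the domain hypothesis $\varphi(\mathbb D)\subset\mathbb D$ in the second part, which is needed solely to ensure $1-|\varphi(\omega)|^2>0$ and hence that the logarithm is defined and smooth; with that in hand the computation is purely formal and requires no analytic estimates.
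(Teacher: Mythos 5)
Your proposal is correct and follows essentially the same route as the paper: take the $\overline{\omega}$-derivative of $\log(1+\varphi\overline{\varphi})$ first, then apply the quotient rule in $\omega$, using holomorphy to kill the cross terms, with the second identity handled by the same computation with signs reversed (the paper simply says it is proved ``similarly''). Your explicit remarks on the Wirtinger-calculus facts and on the role of the hypothesis $\varphi(\mathbb D)\subset\mathbb D$ in making $\log(1-|\varphi|^2)$ well defined are accurate but do not change the argument.
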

\begin{proof}
Since $\varphi$ is analytic, we have
\[
\frac{\partial}{\partial\overline{\omega}}\log(1+|\varphi(\omega)|^2)=\frac{\partial}{\partial\overline{\omega}}\log(1+\varphi(\omega)\overline{\varphi(\omega)})=\frac{\overline{\varphi'(\omega)}\varphi(\omega)}{1+\overline{\varphi(\omega)}\varphi(\omega)}.
\]
Thus
\[
\begin{split}
\frac{\partial^2}{\partial\omega\partial\overline{\omega}}\log(1+|\varphi(\omega)|^2)=&\frac{\partial}{\partial\omega}\left(\frac{\overline{\varphi'(\omega)}\varphi(\omega)}{1+\overline{\varphi(\omega)}\varphi(\omega)}\right)\\
=&\frac{\overline{\varphi'(\omega)}\varphi'(\omega)(1+\overline{\varphi(\omega)}\varphi(\omega))-\overline{\varphi'(\omega)}\varphi(\omega)\overline{\varphi(\omega)}\varphi'(\omega)}{(1+\overline{\varphi(\omega)}\varphi(\omega))^2}\\
=&\frac{|\varphi'(\omega)|^2+|\varphi'(\omega)|^2|\varphi(\omega)|^2-|\varphi'(\omega)|^2|\varphi(\omega)|^2}{(1+|\varphi(\omega)|^2)^2}\\
=&\frac{|\varphi'(\omega)|^2}{(1+|\varphi(\omega)|^2)^2}.
\end{split}
\]
One can prove the second formula similarly.
\end{proof}

Recall that when $b \in H^\infty_1$ is not an extreme point, then there exists a unique outer function $a$ in $H^\infty$ such that
\[
a(0)>0,
\]
and
\[
|a|^2+|b|^2=1 \text{ a.e. on } \mathbb T.
\]
The pair $(a,b)$ is called a \textit{pythagorean pair}. Given a pythagorean pair $(a,b)$, if $b$ is not an extreme point of $H^\infty_1$, then, for every $\omega\in\mathbb D$, we have (see \cite[Theorem 23.23]{FM2})
\[
k_\omega\in \HH(b),
\]
and \cite[Corollary 23.25]{FM2}
\begin{equation}\label{eq:norme-noyau-Cauchy}
\|k_\omega\|_b^2=\left(1+\frac{|b(\omega)|^2}{|a(\omega)|^2}\right)\cdot \frac{1}{1-|\omega|^2},\qquad \omega\in\mathbb D.
\end{equation}
We are now ready to present the complete unitary invariants for the model operators acting on de Branges-Rovnyak spaces through the lens of the Cowen-Douglas class.

\begin{Theorem}\label{Thm:Xb-unitary-equivalence}
Let $b_1, b_2 \in H^\infty_1$ be two non-extreme points. Assume that $(a_1,b_1)$ and $(a_2,b_2)$ are two pythagorean pairs, and let
\[
\Phi_j = \frac{b_j}{a_j},
\]
for $j=1,2$. Then
\[
X_{b_1} \cong X_{b_2},
\]
if and only if for every $\omega\in\mathbb D$, we have
\[
\frac{|\Phi_1'(\omega)|}{1+|\Phi_1(\omega)|^2}=\frac{|\Phi_2'(\omega)|}{1+|\Phi_2(\omega)|^2},
\]
\end{Theorem}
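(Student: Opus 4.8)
The plan is to route everything through the complete unitary invariance of curvature (Theorem~\ref{Thm:cowen-douglas}) and reduce the statement to a single curvature computation. Since $b_1$ and $b_2$ are non-extreme, Theorem~\ref{thm:CD-class}(ii) guarantees that $X_{b_1}, X_{b_2} \in B_1(\D)$, so Theorem~\ref{Thm:cowen-douglas} tells us that $X_{b_1} \cong X_{b_2}$ if and only if $\clk_{X_{b_1}} = \clk_{X_{b_2}}$ throughout $\D$. Thus it suffices to produce a closed form for the curvature of a single operator $X_b$ in terms of its Pythagorean ratio $\Phi = b/a$, and then compare.

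To compute $\clk_{X_b}$, I would take the holomorphic cross-section furnished by \eqref{eq:vector-bundle-Xb}, namely $\gamma_{X_b, \omega} = k_{\overline{\omega}}$; note that $k_{\overline{\omega}}(z) = (1 - \omega z)^{-1}$ really is holomorphic in $\omega$, as the curvature formula requires. Substituting $\overline{\omega}$ for the variable in the norm identity \eqref{eq:norme-noyau-Cauchy} gives
\[
\|\gamma_{X_b,\omega}\|_b^2 = \bigl(1 + |\Phi(\overline{\omega})|^2\bigr)\cdot\frac{1}{1-|\omega|^2},
\]
and hence
\[
\log\|\gamma_{X_b,\omega}\|_b^2 = \log\bigl(1+|\Phi(\overline{\omega})|^2\bigr) - \log\bigl(1-|\omega|^2\bigr).
\]

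The one genuinely delicate point is the appearance of $\overline{\omega}$ inside $\Phi$: the argument of the first logarithm is \emph{anti}-holomorphic, so Lemma~\ref{lem:calcul} cannot be applied verbatim. I would handle this by observing that the mixed derivative $\partial^2/\partial\omega\,\partial\overline{\omega}$ is, up to a constant factor, the ordinary Laplacian, which is invariant under the conjugation $\omega \mapsto \overline{\omega}$; equivalently, one may pass to the Schwarz reflection $\Phi^\ast(\omega) := \overline{\Phi(\overline{\omega})}$, which is holomorphic and satisfies $|\Phi^\ast(\omega)| = |\Phi(\overline{\omega})|$ and $|(\Phi^\ast)'(\omega)| = |\Phi'(\overline{\omega})|$ (check on power series). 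Either way, Lemma~\ref{lem:calcul} (first formula, applied to $\Phi^\ast$) together with its second formula (applied to the identity map $\omega \mapsto \omega$, which sends $\D$ into $\D$) yields
\[
\clk_{X_b}(\omega) = -\left(\frac{|\Phi'(\overline{\omega})|^2}{\bigl(1+|\Phi(\overline{\omega})|^2\bigr)^2} + \frac{1}{\bigl(1-|\omega|^2\bigr)^2}\right).
\]

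Finally I would equate $\clk_{X_{b_1}}(\omega) = \clk_{X_{b_2}}(\omega)$ for all $\omega \in \D$. The term $1/(1-|\omega|^2)^2$ coming from the Szeg\"o kernel is common to both operators and cancels, leaving
\[
\frac{|\Phi_1'(\overline{\omega})|^2}{\bigl(1+|\Phi_1(\overline{\omega})|^2\bigr)^2} = \frac{|\Phi_2'(\overline{\omega})|^2}{\bigl(1+|\Phi_2(\overline{\omega})|^2\bigr)^2}.
\]
Taking positive square roots and replacing $\overline{\omega}$ by $\omega$ (a bijection of $\D$) produces exactly the asserted identity, and since every step is an equivalence, both directions follow simultaneously. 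The only real obstacle is the conjugation bookkeeping of the third paragraph; conceptually the proof hinges on the fact that $\clk_{X_b}$ splits cleanly into a universal ``Szeg\"o part'' $-1/(1-|\omega|^2)^2$, shared by all non-extreme $b$, and a $\Phi$-dependent part, so that only the latter can distinguish the two operators.
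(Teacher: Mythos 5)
Your proof is correct and follows essentially the same route as the paper's: reduce to equality of curvatures via Theorem~\ref{Thm:cowen-douglas}, compute $\clk_{X_b}$ from the cross-section $\gamma_{X_b,\omega}=k_{\overline{\omega}}$ using \eqref{eq:norme-noyau-Cauchy} and Lemma~\ref{lem:calcul}, cancel the common Szeg\H{o} term, and take square roots. Your treatment of the antiholomorphic argument $\overline{\omega}$ via the reflection $\Phi^\ast$ (together with the bijectivity of conjugation on $\D$) is in fact more careful than the paper, which silently writes $\frac{\partial^2}{\partial\omega\partial\overline{\omega}}\log\|k_{\overline{\omega}}\|_b^2=\frac{\partial^2}{\partial\omega\partial\overline{\omega}}\log\|k_\omega\|_b^2$; this point, like the paper's harmless sign slip in the Szeg\H{o} term, does not affect the final equivalence.
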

\begin{proof}
As already pointed out, Theorem \ref{thm:CD-class} implies that $X_{b_1},X_{b_2}\in B_1(\mathbb D)$. Moreover, by \eqref{eq:eigenspaces-Xb}, for every $\omega\in\mathbb D$, we have
\[
\ker(X_{b_1}-\omega I)=\ker(X_{b_2}- \omega I)=\mathbb C k_{\overline{\omega}}.
\]
Now it follows from Theorem~\ref{Thm:cowen-douglas} that $X_{b_1} \cong X_{b_2}$ if and only if for every $\omega\in\mathbb D$, we have
\[
\clk_{X_{b_1}}(\omega) = \clk_{X_{b_2}}(\omega),
\]
where, according to \eqref{eq:vector-bundle-Xb}, we have
\[
\clk_{X_{b_i}}(\omega)=-\frac{\partial^2}{\partial\omega\partial\overline{\omega}}\log\|k_{\overline{\omega}}\|_{b_i}^2=-\frac{\partial^2}{\partial\omega\partial\overline{\omega}}\log\|k_\omega\|_{b_i}^2,
\]
for $i=1,2$. But according to \eqref{eq:norme-noyau-Cauchy}, we have
\[
\|k_\omega\|_{b_i}^2=(1+|\Phi_i(\omega)|^2)\cdot \frac{1}{1+|\omega|^2},
\]
whence
\[
\log\|k_\omega\|_{b_i}^2=\log(1+|\Phi(\omega)|^2)-\log(1-|\omega|^2),
\]
for all $\omega\in\mathbb D$. It follows from Lemma \ref{lem:calcul}, for $i=1,2$, and for all $\omega\in\mathbb D$, that
\[
\clk_{X_{b_i}}(\omega)=-\frac{|\Phi_i'(\omega)|^2}{(1+|\Phi_i(\omega)|^2)^2}+\frac{1}{(1-|\omega|^2)^2}.
\]
Therefore $X_{b_1} \cong X_{b_2}$ if and only if
\[
-\frac{|\Phi_1'(\omega)|^2}{(1+|\Phi_1(\omega)|^2)^2}+\frac{1}{(1-|\omega|^2)^2}=-\frac{|\Phi_2'(\omega)|^2}{(1+|\Phi_2(\omega)|^2)^2}+\frac{1}{(1-|\omega|^2)^2},
\]
or equivalently,
\[
\frac{|\Phi_1'(\omega)|^2}{(1+|\Phi_1(\omega)|^2)^2} = \frac{|\Phi_2'(\omega)|^2}{(1+|\Phi_2(\omega)|^2)^2},
\]
for all $\omega \in \D$. This completes the proof of the theorem.
\end{proof}

Next, we turn to $S_b$ for a non-extreme point of $b \in H^\infty_1$. By Theorem \ref{Thm:Sbstar-CD}, we know that $S_b^*\in B_1(\mathbb D)$. This raises the question of the unitary equivalence of a pair of such operators. The following is our answer:

\begin{Theorem}\label{Thm-Sb-unitary-equivalent}
Let $b_1,b_2 \in H^\infty_1$ be two non-extreme points. Then
\[
S_{b_1} \cong S_{b_2},
\]
if and only if for every $\omega\in\mathbb D$, we have
\begin{equation}\label{eq:ue-Sb}
\frac{|b_1'(\omega)|}{1-|b_1(\omega)|^2}=\frac{|b_2'(\omega)|}{1-|b_2(\omega)|^2}.
\end{equation}
\end{Theorem}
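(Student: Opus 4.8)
The plan is to mirror closely the proof of Theorem~\ref{Thm:Xb-unitary-equivalence}, transferring the problem to the adjoints $S_{b_1}^*,S_{b_2}^*$ and then computing their curvatures. First I would observe that $S_{b_1}\cong S_{b_2}$ holds if and only if $S_{b_1}^*\cong S_{b_2}^*$: indeed, if $U$ is a unitary with $US_{b_1}=S_{b_2}U$, then $U^*$ is a unitary satisfying $S_{b_1}^*U^*=U^*S_{b_2}^*$, and conversely. Since Theorem~\ref{Thm:Sbstar-CD} guarantees $S_{b_j}^*\in B_1(\mathbb D)$ for $j=1,2$, I can invoke the Cowen--Douglas complete invariant (Theorem~\ref{Thm:cowen-douglas}) to reduce the claim to the equality of curvatures
\[
\clk_{S_{b_1}^*}=\clk_{S_{b_2}^*}\quad\text{on }\mathbb D.
\]

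The heart of the argument is then the explicit computation of $\clk_{S_b^*}$. From the proof of Lemma~\ref{lem-Sb-Fredholm}, namely \eqref{eq:noyau-S-b-adjoint-omega-I}, we have $\ker(S_b^*-\overline{\omega}I)=\mathbb C k_\omega^b$, so that a nonvanishing holomorphic cross-section of $E_{S_b^*}$ is given by $\gamma_{S_b^*,\omega}=k_{\overline{\omega}}^b$. The reproducing property yields $\|k_\omega^b\|_b^2=k^b(\omega,\omega)=\frac{1-|b(\omega)|^2}{1-|\omega|^2}$, whence
\[
\log\|k_\omega^b\|_b^2=\log(1-|b(\omega)|^2)-\log(1-|\omega|^2).
\]
Because $b$ is non-extreme, it is neither inner nor a unimodular constant, so the maximum modulus principle forces $|b(\omega)|<1$ for all $\omega\in\mathbb D$; hence Lemma~\ref{lem:calcul} applies both to $\varphi=b$ and to $\varphi(\omega)=\omega$, giving
\[
\clk_{S_b^*}(\omega)=\frac{|b'(\omega)|^2}{(1-|b(\omega)|^2)^2}-\frac{1}{(1-|\omega|^2)^2}.
\]

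Finally, equating $\clk_{S_{b_1}^*}=\clk_{S_{b_2}^*}$ and cancelling the common term $\frac{1}{(1-|\omega|^2)^2}$ leaves $\frac{|b_1'(\omega)|^2}{(1-|b_1(\omega)|^2)^2}=\frac{|b_2'(\omega)|^2}{(1-|b_2(\omega)|^2)^2}$ on $\mathbb D$, and taking nonnegative square roots produces exactly \eqref{eq:ue-Sb}. The one point requiring genuine care — the step I would single out as the main obstacle — is the anti-holomorphic indexing $\omega\mapsto k_{\overline{\omega}}^b$ of the cross-section: the curvature is literally $-\frac{\partial^2}{\partial\omega\partial\overline{\omega}}\log\|k_{\overline{\omega}}^b\|_b^2$ rather than $-\frac{\partial^2}{\partial\omega\partial\overline{\omega}}\log\|k_{\omega}^b\|_b^2$. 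I would resolve this exactly as in Theorem~\ref{Thm:Xb-unitary-equivalence}, using that the mixed derivative $\frac{\partial^2}{\partial\omega\partial\overline{\omega}}$, being a multiple of the Laplacian, is invariant under the reflection $\omega\mapsto\overline{\omega}$; thus replacing $k_{\overline{\omega}}^b$ by $k_\omega^b$ merely reflects the curvature, and since $\omega\mapsto\overline{\omega}$ is a bijection of $\mathbb D$, the pointwise equality of curvatures over all of $\mathbb D$ is unaffected. This legitimizes the computation above and completes the equivalence.
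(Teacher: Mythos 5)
Your proposal is correct and follows essentially the same route as the paper: pass to the adjoints, invoke Theorem~\ref{Thm:Sbstar-CD} and the Cowen--Douglas invariant (Theorem~\ref{Thm:cowen-douglas}), compute the curvature from $\|k_\omega^b\|_b^2=\frac{1-|b(\omega)|^2}{1-|\omega|^2}$ via Lemma~\ref{lem:calcul}, and cancel the common term. Your explicit reflection argument justifying the replacement of $k_{\overline{\omega}}^b$ by $k_\omega^b$ is exactly the right way to legitimize a step the paper performs silently, and your curvature formula $\clk_{S_b^*}(\omega)=\frac{|b'(\omega)|^2}{(1-|b(\omega)|^2)^2}-\frac{1}{(1-|\omega|^2)^2}$ in fact has the correct signs where the paper's displayed expression contains harmless typos (an overall sign and $(1+|\omega|^2)$ in place of $(1-|\omega|^2)$), none of which affect the equivalence since the term cancels.
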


\begin{proof}
We prove the equivalent version that $S^*_{b_1} \cong S^*_{b_2}$. According to Theorem~\ref{Thm:Sbstar-CD}, $S^*_{b_1}$ and $S^*_{b_2}$ are in $B_1(\mathbb D)$, and we can apply Theorem~\ref{Thm:cowen-douglas} which implies that $S^*_{b_1} \cong S^*_{b_2}$ is equivalent to
\[
\clk_{S_{b_1}^*} = \clk_{S_{b_2}^*} \text{ on } \mathbb D.
\]
It follows from \eqref{eq:noyau-S-b-adjoint-omega-I} that, for $i=1,2$, we have
\[
\ker(S_{b_i}^*-\omega I)=\mathbb C k_{\overline{\omega}}^{b_i},
\]
whence
\[
\clk_{S_{b_i}^*}(\omega)=-\frac{\partial^2}{\partial\omega\partial\overline{\omega}}\log\|k_{\overline{\omega}}^{b_i}\|_{b_i}^2=-\frac{\partial^2}{\partial\omega\partial\overline{\omega}}\log\|k_\omega^{b_i}\|_{b_i}^2,
\]
for all $\omega\in\mathbb D$. But
\[
\|k_\omega^{b_i}\|_{b_i}^2=\frac{1-|b_i(\omega)|^2}{1-|\omega|^2},
\]
and then
\[
\log\|k_\omega^{b_i}\|_{b_i}^2=\log(1-|b_i(\omega)|^2)-\log(1-|\omega|^2),
\]
for all $\omega\in\mathbb D$. According to Lemma~\ref{lem:calcul}, we thus get
\[
\clk_{S_{b_i}^*}(\omega)=-\frac{|b_i'(\omega)|^2}{(1-|b_i(\omega)|^2)^2}+\frac{1}{(1+|\omega|^2)^2},
\]
for all $\omega\in\mathbb D$. Therefore $S^*_{b_1} \cong S^*_{b_2}$ is equivalent to
\[
-\frac{|b_1'(\omega)|^2}{(1-|b_1(\omega)|^2)^2}+\frac{1}{(1+|\omega|^2)^2}=-\frac{|b_2'(\omega)|^2}{(1-|b_2(\omega)|^2)^2}+\frac{1}{(1+|\omega|^2)^2},
\]
that is,
\[
\frac{|b_1'(\omega)|}{1-|b_1(\omega)|^2}=\frac{|b_2'(\omega)|}{1-|b_2(\omega)|^2},
\]
for all $\omega\in\mathbb D$. This completes the proof of the theorem.
\end{proof}

\section{Carath\'eodory angular derivatives}\label{sec: derivatives}

Recently, there has been particular interest in the case where $b$ is a rational (not inner) function in $H^\infty_1$. Indeed, in that case, we have a nice description of the space $\mathcal H(b)$. See for instance \cite{MR4653340, MR3503356,MR4404446,MR4596028}. In this particular case, we prove that condition \eqref{eq:ue-Sb} appearing in Theorem \ref{Thm-Sb-unitary-equivalent} is connected to Carath\'eodory angular derivatives. Recall that a function $b\in H^\infty_1$ has a Carath\'eodory angular derivative (an ADC for short) at a point $\zeta\in\mathbb T$ if $b$ and $b'$ have a non-tangential limit at $\zeta$, with 
\[
|b(\zeta)|=1.
\]
A well known result of Carath\'eodory says that $b$ has an ADC at $\zeta$ if and only if
\[
c=\liminf_{z\to\zeta}\frac{1-|b(z)|^2}{1-|z|^2}<\infty.
\]
Moreover, in that case, we have (cf. \cite[Theorem 21.1]{FM2})
\[
c>0,
\]
and 
\[
c=\zeta b'(\zeta)\overline{b(\zeta)}.
\]
The following theorem is essentially in the context of Theorem \ref{Thm-Sb-unitary-equivalent}. More specifically, we apply the identity given in \eqref{eq:ue-Sb}.

\begin{Theorem}\label{cor:ADC}
 Let $b_1,b_2 \in H^\infty_1$ be two rational (not inner) functions in $H^\infty_1$. Assume that
\[
S_{b_1} \cong S_{b_2}.
\]
Then $b_1$ and $b_2$ must have the same Carath\'eodory angular derivative points.
\end{Theorem}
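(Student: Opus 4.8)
The plan is to reduce the unitary‑equivalence hypothesis to the pointwise identity \eqref{eq:ue-Sb} and then detect the angular‑derivative points from the boundary blow‑up of the function
\[
g_j(\omega) := \frac{|b_j'(\omega)|}{1-|b_j(\omega)|^2}, \qquad \omega\in\mathbb{D}.
\]
First I would note that, since $b_1,b_2$ are rational and not inner, they are in particular non‑extreme: as each $b_j$ has all its poles outside $\overline{\mathbb{D}}$, it extends analytically across $\mathbb{T}$, so $1-|b_j|^2$ is real‑analytic and not identically zero on $\mathbb{T}$, hence has only finitely many zeros, each of finite order, giving $\log(1-|b_j|^2)\in L^1(\mathbb{T})$. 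Thus $S_{b_j}$ is well defined and Theorem~\ref{Thm-Sb-unitary-equivalent} applies; the hypothesis $S_{b_1}\cong S_{b_2}$ then yields $g_1(\omega)=g_2(\omega)$ for every $\omega\in\mathbb{D}$.

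Next I would recast the ADC condition in purely boundary terms. Because $b_j$ extends analytically across $\mathbb{T}$, both $b_j$ and $b_j'$ have ordinary (hence non‑tangential) limits at every $\zeta\in\mathbb{T}$, so by definition $\zeta$ is an ADC point of $b_j$ exactly when $|b_j(\zeta)|=1$, and there are only finitely many such $\zeta$. At such a point the Carathéodory identity $c=\zeta b_j'(\zeta)\overline{b_j(\zeta)}$ combined with $c>0$ forces $|b_j'(\zeta)|=c>0$, so in particular $b_j'(\zeta)\neq 0$. This nonvanishing of the numerator at ADC points is the crux of the argument and the step I expect to require the most care, since the blow‑up criterion below breaks down if $b_j'$ can vanish where $|b_j|=1$.

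With this in place, the boundary behaviour of $g_j$ splits cleanly. As $\omega\to\zeta$ within $\mathbb{D}$: if $|b_j(\zeta)|<1$, then $1-|b_j(\omega)|^2\to 1-|b_j(\zeta)|^2>0$ and $|b_j'(\omega)|\to|b_j'(\zeta)|<\infty$, so $g_j$ has a finite limit; if $|b_j(\zeta)|=1$, then $1-|b_j(\omega)|^2\to 0^+$ (using $|b_j|<1$ on $\mathbb{D}$, valid by the maximum principle since $b_j$ is not inner) while $|b_j'(\omega)|\to|b_j'(\zeta)|>0$, so $g_j(\omega)\to+\infty$. Hence $\zeta\in\mathbb{T}$ is an ADC point of $b_j$ if and only if $\lim_{\omega\to\zeta}g_j(\omega)=+\infty$.

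Finally I would conclude by matching blow‑up sets. Since $g_1\equiv g_2$ on $\mathbb{D}$, for each fixed $\zeta\in\mathbb{T}$ the boundary limits of $g_1$ and $g_2$ along $\mathbb{D}$ agree (both infinite or both finite), and by the previous step this says precisely that $\zeta$ is an ADC point of $b_1$ if and only if it is an ADC point of $b_2$. This is exactly the claimed equality of the sets of Carathéodory angular derivative points, completing the proof.
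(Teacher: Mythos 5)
Your proof is correct, and while it shares the paper's skeleton (non-extremeness of rational non-inner symbols, reduction via Theorem~\ref{Thm-Sb-unitary-equivalent} to the pointwise identity \eqref{eq:ue-Sb}, and Carath\'eodory's theorem), the detection step is genuinely different. The paper argues directionally with sequences: it picks $\omega_n\to\zeta$ realizing the liminf for $b_1$, transfers through \eqref{eq:ue-Sb}, and uses only the boundedness of $b_2'$ to bound $\liminf_n (1-|b_2(\omega_n)|^2)/(1-|\omega_n|^2)$, concluding by the liminf criterion; it never needs to know that $b_2'(\zeta)\neq 0$ or that the ADC points of a rational function are exactly its unimodular boundary points. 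You instead exploit rationality more fully: since $b_j$ and $b_j'$ extend analytically across $\mathbb{T}$, the ADC set is exactly $\{\zeta\in\mathbb{T}: |b_j(\zeta)|=1\}$, and you prove the exact dichotomy that $g_j=|b_j'|/(1-|b_j|^2)$ stays finite at $\zeta$ when $|b_j(\zeta)|<1$ and blows up when $|b_j(\zeta)|=1$ --- the crux, as you correctly flag, being $b_j'(\zeta)\neq 0$ at ADC points, which you extract from $c=\zeta b_j'(\zeta)\overline{b_j(\zeta)}>0$ in \cite[Theorem 21.1]{FM2}; this is legitimate and is in fact the same fact the paper uses in the form $c=|b_1'(\zeta)|>0$, with no circularity since the continuous extension of $b_j$ and $b_j'$ makes every unimodular boundary point an ADC point by definition. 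Your route buys a cleaner, symmetric statement (for rational non-inner $b$, the ADC set is precisely the blow-up set of $g_b$, so $g_1\equiv g_2$ matches the ADC sets in one stroke), together with a self-contained proof of non-extremeness via the finitely many finite-order zeros of the real-analytic function $1-|b_j|^2$ on $\mathbb{T}$, where the paper cites \cite[Lemma 3.1]{MR3503356}; the paper's sequential argument is marginally lighter in what it asks of the boundary behavior, needing only the crude bound $\|b_2'\|_\infty$ rather than the nonvanishing of derivatives at unimodular points.
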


\begin{proof}
It is known (see \cite[Lemma 3.1]{MR3503356}) that $b_1$ and $b_2$ are non extreme points in $H^\infty_1$. Then, according to Theorem~\ref{Thm-Sb-unitary-equivalent}, for every $\omega\in\mathbb D$, we have
\begin{equation}\label{eq2:ue-Sb}
\frac{|b_1'(\omega)|}{1-|b_1(\omega)|^2}=\frac{|b_2'(\omega)|}{1-|b_2(\omega)|^2}.
\end{equation}
Assume now that $\zeta\in\mathbb T$ is a point where $b_1$ has a Carath\'eodory angular derivative. This means that there exists a sequence $(\omega_n)_n$ in $\mathbb D$ such that $\omega_n\to\zeta$ as $n\to \infty$, and
\[
\lim_{n\to\infty}\frac{1-|b_1(\omega_n)|^2}{1-|\omega_n|^2}=c=|b_1'(\zeta)|>0.
\]
Using the fact that $b_1'$ is continuous on the closed unit disk, this implies that
\[
\frac{1-|\omega_n|^2}{1-|b_1(\omega_n)|^2}|b_1'(\omega_n)|^2\longrightarrow \frac{1}{c}|b_1'(\zeta)|=1,\qquad\mbox{as }n\to\infty.
\]
Hence, it follows from \eqref{eq2:ue-Sb} that
\[
\frac{1-|\omega_n|^2}{1-|b_2(\omega_n)|^2}|b_2'(\omega_n)|^2\longrightarrow 1,\qquad\mbox{as }n\to\infty.
\]
But, since $b_2$ is a rational function in $H^\infty_1$, the function $b_2'$ is also bounded, and then for $n$ sufficiently large, we have
\[
\frac{1}{2}\leq \frac{1-|\omega_n|^2}{1-|b_2(\omega_n)|^2} |b'_2(\omega_n)|\leq \frac{1-|\omega_n|^2}{1-|b_2(\omega_n)|^2} \|b_2'\|_\infty.
\]
In particular,
\[
\liminf_{n\to+\infty}\frac{1-|b_2(\omega_n)|^2}{1-|\omega_n|^2}<\infty,
\]
which gives that $b_2$ has a Carath\'eodory angular derivative at $\zeta$. The above argument is of course symmetric proving the result.
\end{proof}

In this way, we have connected Carath\'eodory angular derivatives with Cowen-Douglas class operators and their curvatures.

\section{An example}\label{sec: examples}

This section aims to illustrate the unitary equivalance results with even a more concrete example. Define $b_1, b_2 \in H^\infty_1$ by
\[
b_1(z)=\frac{1+z}{2} \text{ and } b_2(z)=\frac{1-z}{2},
\]
for $z\in\mathbb D$. It is easy to see that $b_1$ and $b_2$ are non extreme points. Moreover, $(b_1,b_2)$ and $(b_2,b_1)$ are pythagorean pairs. Following the notations of Theorem \ref{Thm:Xb-unitary-equivalence}, we have
\[
\Phi_1(z)=\frac{b_1(z)}{b_2(z)}=\frac{1+z}{1-z},
\]
and
\[
\Phi_2(z)=\frac{b_2(z)}{b_1(z)}=\frac{1-z}{1+z}.
\]
As $\Phi_1'(z)=\frac{2}{(1-z)^2}$, it follows that
\[
\frac{|\Phi_1'(z)|}{1+|\Phi_1(z)|^2}=\frac{2}{|1-z|^2+|1+z|^2}=\frac{1}{1+|z|^2},
\]
for all $z \in \D$. Similarly, $\Phi_2'(z)=-\frac{2}{(1+z)^2}$ implies
\[
\frac{|\Phi_2'(z)|}{1+|\Phi_2(z)|^2}=\frac{2}{|1-z|^2+|1+z|^2}=\frac{1}{1+|z|^2},
\]
for all $z \in \D$. Therefore, for every $z\in\mathbb D$,
\[
\frac{|\Phi_1'(z)|}{1+|\Phi_1(z)|^2}=\frac{|\Phi_2'(z)|}{1+|\Phi_2(z)|^2},
\]
and Theorem~\ref{Thm:Xb-unitary-equivalence} implies that $X_{b_1} \cong X_{b_2}$.

In this example, we also note that
\[
\mathcal H(b_1) \neq \mathcal H(b_2).
\]
Indeed, since $b_1$ and $b_2$ are two non extreme points of $H^\infty_1$, it follows from \cite{Ball-Kriete} (see also \cite[Corollary 27.17]{FM2}) that if $\mathcal H(b_1)=\mathcal H(b_2)$, then it would imply that
\[
a_1a_2^{-1}\in H^\infty.
\]
But $a_1=b_2$ and $a_2=b_1$, and thus
\[
\frac{a_1(z)}{a_2(z)}=\frac{b_2(z)}{b_1(z)}=\frac{1-z}{1+z}.
\]
Clearly,
\[
\frac{1-z}{1+z} \notin H^\infty.
\]
Therefore $\mathcal H(b_1)\neq \mathcal H(b_2)$. Note that in this example, Corollary~\ref{cor:ADC} implies that $S_{b_1}$ and $S_{b_2}$ are not unitarily equivalent. Indeed, in that case, it is easy to see that $b_1$ has a Carath\'eodory angular derivative only at point $1$, whereas $b_2$ has a Carath\'eodory angular derivative only at point $-1$.

%Theorem~\ref{Thm-Sb-unitary-equivalent} implies that $S_{b_1}$ and $S_{b_2}$ are not unitarily equivalent. Indeed, we have that
%\[
%b_1' \equiv \frac{1}{2} \text{ and } b_2' \equiv \frac{1}{2}.
%\]
%Then
%\[
%\frac{|b_1'(\omega)|}{1-|b_1(\omega)|^2}=\frac{2}{1-|1 + \omega|^2},
%\]
%and
%\[
%\frac{|b_2'(\omega)|}{1-|b_1(\omega)|^2} = \frac{2}{1 - |1 - \omega|^2},
%\]
%yields that $\frac{|b_1'(\omega)|}{1-|b_1(\omega)|^2}$ and $\frac{|b_2'(\omega)|}{1-|b_1(\omega)|^2}$ are not the same on $\D$.

It would be fascinating to explore how the results of this paper extend to several variables or more general domains. While many significant results are known in this direction, particularly in the context of Cowen-Douglas class operators \cite{Misra} and related theories in several variables \cite{Zheng}, further progress is needed. This is especially true given the sophistication required in the study of de Branges-Rovnyak spaces in several variables.

\bibliographystyle{plain}

\bibliography{references}

\end{document}